\newcommand{\R}{\mathbb{R}}
\newcommand{\1}{\textbf{1}}
\newcommand{\dd}{\mathrm{d}}
\newcommand{\call}[4]{\int_{#1}^{#2} {#3} \; \textrm{d} {#4}}
\newcommand{\fun}[3]{#1\colon #2 \longrightarrow #3}
\newcommand{\mb}[1]{\mathbb{{#1}}}
\newcommand{\mc}[1]{\mathcal{{#1}}}
\newcommand{\e}{\varepsilon}
\newcommand{\vp}{\varphi}
\DeclareMathOperator{\var}{\textrm{Var}}
\DeclareMathOperator{\dist}{\textrm{dist}}
\DeclareMathOperator{\med}{\textrm{Med}}
\DeclareMathOperator{\ent}{\textrm{Ent}}
\newtheorem{prop}{Proposition}
\newtheorem{thm}{Theorem}
\newtheorem{lem}{Lemma}
\newtheorem{cor}{Corollary}
\theoremstyle{definition}
\newtheorem*{def*}{Definition}
\newtheorem{definition}{Definition}
\theoremstyle{remark}
\newtheorem*{rem*}{Remark}
\title{A note on the convex infimum convolution inequality}
\author{Naomi Feldheim, Arnaud Marsiglietti, Piotr Nayar, Jing Wang}
\date{}
\begin{document}

\maketitle

\begin{abstract}
We characterize the symmetric measures which satisfy the one dimensional convex infimum convolution inequality of Maurey. For these measures the tensorization argument yields the two level Talagrand's concentration inequalities for their products and convex sets in $\R^n$.   
\end{abstract}

\noindent {\bf 2010 Mathematics Subject Classification.} Primary 26D10; Secondary 28A35, 52A20, 60E15.

\noindent {\bf Key words and phrases.} Infimum convolution, Concentration of measure, Convex sets, Product measures, Poincar\'e inequality.

\section{Introduction}\label{sec:intro}

In the past few decades a lot of attention has been devoted to study the concentration of measure phenomenon, especially the concentration properties of product measures on the Euclidean space $\mb{R}^n$. Through this note by $|x|_p$ we denote the $l_p$ norm on $\mb{R}^n$, namely $|x|_p=(\sum_{i=1}^n|x_i|^p)^{1/p}$, and let us take $B_p^n=\{x \in \mb{R}^n: \ |x|_p \leq 1 \}$. We say that a Borel probability measure $\mu$ on $\mb{R}^n$ satisfies concentration with a profile $\alpha_\mu(t)$ if for any set $A \subset \mb{R}^n$ with $\mu(A) \geq 1/2$ we have $\mu(A+tB_2^n) \geq 1-\alpha_\mu(t)$, $t \geq 0$, where $B_2^n$ is the Euclidean ball of radius $1$, centred at the origin. Equivalently, for any $1$-Lipschitz function $f$ on $\mb{R}^n$ we have $\mu(\{x: \ |f-\med_\mu f| > t\}) \leq \alpha_\mu(t)$, $t >0$, where $\med_\mu f$ is a median of $f$. Moreover, the median can be replaced with the mean, see \cite[Proposition 1.8]{L}.  

The usual way to reach concentration is via certain functional inequalities. For example, we say that $\mu$ satisfies  Poincar\'e inequality (sometimes also called the spectral gap inequality) with constant $C$, if for any $f:\mb{R}^n \to \mb{R}$ which is, say, $C^1$ smooth, we have 
\begin{equation}\label{poincare}
	\var_\mu(f) \leq C\call{}{}{|\nabla f|^2}{\mu}.
\end{equation}
We assume that $C$ is the best constant possible.
This inequality implies the exponential concentration, namely, concentration with a profile $\alpha_\mu(t)=2\exp(-t/2\sqrt{C})$. The product exponential probability measure $\nu_n$, i.e., measure with density $2^{-n}\exp(-|x|_1)$, satisfies \eqref{poincare} with the constant $4$ (the one dimensional case goes back to \cite{K}; for a one-line proof see Lemma 2.1 in \cite{BL}). From this fact (case $n=1$) one can deduce, using the transportation argument, that any one-dimensional log-concave measure, i.e. the measure with the density of the form $e^{-V}$, where $V$ is convex, satisfies the Poincar\'e inequality with a universal constant $C$. In fact, there is a so-called Muckenhoupt condition that fully characterize measures satisfying the one dimensional Poincar\'e inequality, see \cite{Mu} and \cite{K}. Namely, let us assume, for simplicity, that our measure is symmetric. If the density of the absolutely continuous part of $\mu$ is equal to $p$ then the optimal constant in \eqref{poincare} satisfies
\[
	\frac{1}{(1+\sqrt{2})^2} B \leq C \leq 4  B, \qquad \textrm{where} \qquad  B = \sup_{x>0} \left\{\mu[x,\infty) \call{0}{x}{\frac{1}{p(y)}}{y} \right\} .
\]
In particular, the constant $C$ is finite if $B < \infty$. Suppose that $\mu_1,\ldots \mu_1$ satisfy Poincar\'e inequality with constant $C$. Then the same can be said about the product measure $\mu= \mu_1 \otimes \ldots \otimes \mu_n$, due to tensorization property of \eqref{poincare} (see Corollary 5.7 in \cite{L}), which follows immediately from the subadditivity of the variance. Thus, one can say that the Poincar\'e inequality is fully understood in the case of product measures.

A much stronger functional inequality that gives concentration is the so-called log-Sobolev inequality,
\begin{equation}\label{sobolev}
	\ent_\mu(f^2) \leq 2C\call{}{}{|\nabla f|^2}{\mu},
\end{equation}
where for a function $g \geq 0$ we set
\[
\ent_\mu(g)=\call{}{}{g \ln g}{\mu}-\left(\call{}{}{g}{\mu} \right) \ln \left( \call{}{}{g}{\mu} \right).
\]
It implies the Gaussian concentration phenomenon, i.e., concentration with a profile $\alpha_\mu(t)=\exp(-t^2/16C)$. As an example, the standard Gaussian measure $\gamma_n$, i.e., the measure with density $(2\pi)^{-n/2} e^{-|x|^2/2}$, satisfies $\eqref{sobolev}$ with the constant $1$ (see \cite{G}). As in \eqref{poincare}, the log-Sobolev inequality possess similar terrorization properties and there is a full description of measures $\mu$ satisfying \eqref{sobolev} on the real line, see \cite{BG2}. Namely, the optimal constant $C$ in \eqref{sobolev} satisfies
\[
	\frac{1}{75} B' \leq C \leq 936  B', \quad \textrm{where} \quad  B' = \sup_{x>0} \left\{ \mu[x,\infty) \ln \left( \frac{1}{\mu[x,\infty)} \right) \call{0}{x}{\frac{1}{p(y)}}{y} \right\}.
\]

Another way to concentration leads through the so-called property $(\tau)$ (see \cite{M} and \cite{LW}). 
A measure $\mu$ on $\R^n$ is said to satisfy property $(\tau)$ with a nonnegative cost function $\vp$ if the inequality
\begin{equation}\label{tau}
	\left( \call{}{}{e^{f \square \vp}}{\mu} \right)\left( \call{}{}{e^{-f }}{\mu} \right) \leq 1
\end{equation}   
holds for every bounded measurable function $f$ on $\R^n$. Here $f \square \vp = \inf_{y}\{ f(y)+\vp(x-y) \}$ is the so-called infimum convolution. Property $(\tau)$ implies concentration, see Lemma 4 in \cite{M}. Namely, for every measurable set $A$ we have
\[
	\mu\left(\{ x \notin A + \{ \vp < t \} \} \right) \leq \mu(A)^{-1} e^{-t}.
\]   

In the seminal paper \cite{M} Maurey showed that the exponential measure $\nu_1$ satisfies the infimum convolution inequality with a the cost function $\vp(t)=\min\{\frac{1}{36}t^2, \frac{2}{9}(|t|-2)\}$. As a consequence of tensorization (see Lemma 1 in \cite{M}) we get that $\nu_n$ satisfies \eqref{tau} with the cost function $\vp(x)=\sum_{i=1}^n \vp(x_i)$. This leads to the so-called Talagrand's two level concentration,
\[
	\nu_n\left( A + 6\sqrt{t} B_2^n + 9t B_1^n \right) \geq 1- \frac{1}{\nu_n(A)}e^{-t},
\]
(see Corollary 1 in \cite{M}). As we can see, this gives a stronger conclusion than the Poincar\'e inequality. It is a well known fact that the property $(\tau)$ with a cost function which is quadratic around $0$, say $\vp(x)=C|x|^2$ for $|x|\leq 1$, implies the Poincar\'e inequality with the constant $1/(4C)$, see Corrolary 3 in \cite{M}. We shall sketch the argument in Section \ref{sec2}.

The goal of this article is to investigate concentration properties of a wider class of measures, i.e., measures that may not even satisfy the Poincar\'e inequality. For example, in the case of purely atomic measures one can easily construct a non-constant function $f$ with $\call{}{}{|\nabla f|^2}{\mu}=0$. However, one can still hope to get concentration if one restricts himself to the class of convex sets. It turns out that to reach exponential concentration for convex sets, it suffices to prove that the measure $\mu$ satisfies the convex Poincar\'e inequality. Below we state the formal definition.
\begin{definition}
We say that a Borel probability measure $\mu$ on $\R$ satisfies the convex Poincar\'e inequality with a constant $C_p$ if for every convex function $f:\R \to \R$ with $f'$ bounded we have
\begin{equation}\label{main}
	\var_\mu f \leq C_p \call{\R}{}{(f')^2}{\mu}.
\end{equation}
\end{definition}
\noindent Here we adopt the standard convention that for a locally Lipschitz function $\fun{f}{\R}{\R}$ the gradient  $f'$ is defined by
\begin{equation}\label{grad}
	f'(x) = \limsup_{h\to0} \frac{f(x+h) - f(x)}{h}.
\end{equation}
This definition applies in particular to convex $f$. If $f$ is differentiable, \eqref{grad} agrees with the usual derivative. 

We also  need the following definition.
\begin{definition}
Let $h>0$ and $\lambda \in [0,1)$. Let $\mc{M}(h,\lambda)$ be the class of symmetric measures on $\mb{R}$, satisfying the condition $\mu[x+h,\infty) \leq \lambda \mu[x,\infty)$ for $x \geq 0$. Moreover, let $\mc{M}^+(h,\lambda)$ be the class of measures supported on $\mb{R_+}$, satisfying the same condition.
\end{definition}
The inequality \eqref{main} has been investigated by Bobkov and G\"otze, see \cite[Theorem 4.2]{BG}. In particular, the authors proved \eqref{main} in the class $\mc{M}(h,\lambda)$ with a constant $C_p$ depending only on $h$ and $\lambda$. This leads to the exponential concentration for $1$-Lipschitz convex functions $f$ (as well as the exponential concentration for convex sets) via the standard Herbst-like argument which actually goes back to \cite{AS} (see also, e.g., Theorem 3.3 in \cite{L}), by deriving the bound on the Laplace transform of $f$.

It is worth mentioning that a much stronger condition $\mu[x+C/x,\infty) \leq \lambda\mu[x,\infty)$, $x \geq m$, where $m$ is a fixed positive number, has been considered. It implies log-Sobolev inequalities for log-convex functions and the Gaussian concentration of measure for convex sets. We refer to the nice study \cite{A} for the details.

In \cite{M} the author showed that every measure supported on a subset of $\R$ with diameter $1$ satisfies the convex property $(\tau)$, i.e. the inequality \eqref{tau} for convex functions $f$, with the cost function $\frac14 |x|^2$ (see Theorem 3 therein). The aim of this note is to extend both this result and the convex Poincar\'e inequality of Bobkov and G\"otze. We introduce the following definition.
\begin{definition}
Define 
\[
	\vp_0(x)= \left\{  \begin{array}{ll} \frac12 x^2 & |x| \leq 1 \\
	|x|-\frac12 & |x| > 1
	
	\end{array} \right. .
\]
We say that a Borel probability measure $\mu$ on $\mb{R}$ satisfies convex exponential property $(\tau)$ with constant $C_\tau$ if for any convex function $f:\mb{R} \to \mb{R}$ with $\inf f>-\infty$ we have
\begin{equation}\label{deftau}
	\left( \call{}{}{e^{f \Box \vp}}{\mu} \right) \left( \call{}{}{e^{-f}}{\mu} \right) \leq 1
\end{equation}	
with $\vp(x)=\vp_0(x/C_\tau)$.
\end{definition}

\noindent We are ready to state our main result.

\begin{thm}\label{convtau}

The following conditions are equivalent
\begin{itemize}
\item[(a)] $\mu \in \mc{M}(h,\lambda)$,
\item[(b)] There exists $C_p>0$ such that $\mu$ satisfies the convex Poincar\'e inequality with constant $C_p$, 
\item[(c)] There exists $C_\tau>0$ such that $\mu$ satisfies the convex exponential property $(\tau)$ with constant $C_\tau$. 
\end{itemize}
Moreover, (a) implies (c) with the constant $C_\tau=17 h/(1-\lambda)^2$, (c) implies (b) with the constant $C_p=\frac12 C_\tau^2$ and (b) implies (a) with $h= \sqrt{8C_p}$ and $\lambda= 1/2$.
\end{thm}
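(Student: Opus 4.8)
The plan is to prove the cycle (a)$\Rightarrow$(c)$\Rightarrow$(b)$\Rightarrow$(a) with the three indicated constants. The implications (b)$\Rightarrow$(a) and (c)$\Rightarrow$(b) are soft and follow, respectively, from a single well-chosen test function and from the classical linearization of property $(\tau)$; the implication (a)$\Rightarrow$(c) carries all the weight and is where I expect the real difficulty to lie.

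For (b)$\Rightarrow$(a), I would feed the convex Poincar\'e inequality \eqref{main} the ramp $f_x(t)=\max(t-x,0)$, which is convex with $f_x'=\1_{[x,\infty)}$ under the convention \eqref{grad}, so that $\int (f_x')^2\,\mathrm{d}\mu=\mu[x,\infty)$. Writing the variance as
\[
	\var_\mu f_x=\tfrac12\iint \big(f_x(s)-f_x(t)\big)^2\,\mathrm{d}\mu(s)\,\mathrm{d}\mu(t)
\]
and discarding all pairs outside the two symmetric regions $\{s\le x,\ t\ge x+h\}$ and $\{t\le x,\ s\ge x+h\}$, on which the integrand is at least $h^2$, gives $\var_\mu f_x\ge h^2\,\mu(-\infty,x]\,\mu[x+h,\infty)$. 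For $x\ge0$ symmetry yields $\mu(-\infty,x]\ge\tfrac12$, so $\var_\mu f_x\ge\tfrac{h^2}{2}\mu[x+h,\infty)$. Combined with \eqref{main} this gives $\mu[x+h,\infty)\le \tfrac{2C_p}{h^2}\mu[x,\infty)$, and the choice $h=\sqrt{8C_p}$ turns the prefactor into $\tfrac14\le\tfrac12$, placing $\mu$ in $\mc{M}(h,1/2)$.

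For (c)$\Rightarrow$(b), I would linearize \eqref{deftau}. Given $g$ convex with $g'$ bounded, apply \eqref{deftau} to $f=tg$ for small $t>0$ (first replacing $g$ by $\max(g,-n)$, which is still convex with bounded derivative and is bounded below, so that $\inf f>-\infty$, and letting $n\to\infty$ at the end). Since near the origin $\vp(u)=u^2/(2C_\tau^2)$ and $g'$ is bounded, for $t$ small the minimizer in $(tg)\Box\vp(x)=\inf_u\{tg(x-u)+\vp(u)\}$ stays in the quadratic regime, which yields the uniform expansion $(tg)\Box\vp(x)=tg(x)-\tfrac{t^2C_\tau^2}{2}(g'(x))^2+o(t^2)$. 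Taking logarithms in \eqref{deftau} and expanding both factors to second order, the coefficients of $t^0$ and $t^1$ cancel and the coefficient of $t^2$ reads $\var_\mu g-\tfrac{C_\tau^2}{2}\int(g')^2\,\mathrm{d}\mu$, which must therefore be nonpositive. This is exactly \eqref{main} with $C_p=\tfrac12 C_\tau^2$; the only delicate point is the uniformity of the error $o(t^2)$, controlled through the bound on $g'$.

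Finally, for the decisive implication (a)$\Rightarrow$(c), I would normalize $\int e^{-f}\,\mathrm{d}\mu=1$ (legitimate since \eqref{deftau} is invariant under adding constants to $f$) and reduce, by approximation, to a bounded piecewise-linear convex $f$, so that the goal becomes $\int e^{f\Box\vp}\,\mathrm{d}\mu\le1$. Since $\vp$ is quadratic on $[-C_\tau,C_\tau]$ and linear with slope $1/C_\tau$ beyond, I would split $\mathbb{R}$ into consecutive blocks of length $h$ and treat the two parts of $\vp$ separately: on each block, which has diameter $h$, the quadratic part of $\vp$ dominates and Maurey's bounded-support estimate (Theorem~3 in \cite{M}, rescaled from diameter $1$ to diameter $h$) controls the local contribution, while moving the infimum convolution across $j$ successive blocks costs order $jh/C_\tau$ in $\vp$, to be weighed against a block mass that the hypothesis $\mu[x+h,\infty)\le\lambda\mu[x,\infty)$ bounds by order $\lambda^{j}$. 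The resulting estimates are thus governed by two nested geometric series in $\lambda$, which is the source of the factor $(1-\lambda)^{-2}$, and matching the slope $1/C_\tau$ of the linear part against the decay scale $h$ forces $C_\tau$ proportional to $h$; bookkeeping of the absolute constants produces $C_\tau=17h/(1-\lambda)^2$. I expect the main obstacle to be exactly this gluing step: assembling the block-wise quadratic-cost inequalities into one global inequality for the mixed quadratic--linear cost $\vp$ while keeping every constant explicit.
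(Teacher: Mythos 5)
Your implications (b)$\Rightarrow$(a) and (c)$\Rightarrow$(b) are correct and essentially the same as the paper's: the paper also tests \eqref{main} with the ramp $f_u(x)=\max(x-u,0)$ (it lower-bounds $\var_\mu f_u$ by $\frac14\int f_u^2\,\dd\mu$ restricted to $[u+\sqrt{8C_p},\infty)$, while you use a product-region bound; both give $\mu[u+h,\infty)\le\tfrac12\mu[u,\infty)$ with $h=\sqrt{8C_p}$), and it proves (c)$\Rightarrow$(b) by exactly your linearization $f=\e g$, expanding $(\e g)\Box\vp = \e g - \tfrac12\e^2C_\tau^2 (g')^2+o(\e^2)$ and comparing the $\e^2$ coefficients.

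The genuine gap is (a)$\Rightarrow$(c), which is the heart of the theorem and which you do not prove: the ``gluing step'' you name as the expected obstacle is precisely the step that is missing, and your block-decomposition strategy gives no mechanism for it. Property $(\tau)$ is a global multiplicative inequality: writing $\mu=\sum_j p_j\mu_j$ with $\mu_j$ the normalized restrictions to blocks of length $h$, one has $\left(\int e^{f\Box\vp}\dd\mu\right)\left(\int e^{-f}\dd\mu\right)=\bigl(\sum_j p_j a_j\bigr)\bigl(\sum_j p_j b_j\bigr)$, and Maurey's Theorem 3 applied to each $\mu_j$ controls only the diagonal products $a_jb_j$; Cauchy--Schwarz bounds the product of sums from \emph{below} by such diagonal terms, not from above, so the block inequalities do not assemble into the global one, and the infimum convolution moreover couples values of $f$ across blocks. (Property $(\tau)$ tensorizes over product spaces, which is a different structure from a partition of the line.) Your reduction to ``bounded piecewise-linear convex $f$'' is also empty as stated, since a convex function bounded on all of $\R$ is constant, and the claim that bookkeeping ``produces $C_\tau=17h/(1-\lambda)^2$'' is read off the statement rather than derived. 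The paper's actual route is quite different: (i) reduce to $|f'|\le 1/C_\tau$ by flattening $f$ where its slope is too large, checking $f\Box\vp=g\Box\vp$; (ii) use $4ab\le(a+b)^2$ to reduce \eqref{deftau} to the single inequality $\int\bigl(e^{f\Box\vp}+e^{-f}\bigr)\dd\mu\le 2$; (iii) prove the pointwise bound $(f\Box\vp_1)(x)\le f(x)-\tfrac{C_1}{2}|(Df)(x)|^2$, where $Df$ is a discrete gradient with step $h$ (Lemma \ref{bound}); and (iv) establish the exponential-form discrete Poincar\'e inequality $\int\bigl(e^{f/2}-e^{-f/2}\bigr)^2\dd\mu\le\frac{8}{(1-\lambda)^2}\int e^{f}(Df)^2\dd\mu$ for measures in $\mc{M}(h,\lambda)$ (Lemmas \ref{bob} and \ref{bobex}, after Bobkov--G\"otze), the constant $17$ then emerging from the requirement $C_1^2\bigl(1-e^{-1/(2C_1)}\bigr)\ge 8/(1-\lambda)^2$. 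Without something playing the role of (iii) and (iv), your outline cannot be completed as written.
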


\noindent This generalizes Maurey's theorem due to the fact that any symmetric measure supported on the interval $[-1,1]$  belongs to $\mc{M}(1,0)$. 

It is well known that the convex property $(\tau)$ tensorizes, namely, if $\mu_1,\ldots, \mu_n$ have convex property $(\tau)$ with cost functions $\vp_1,\ldots \vp_n$ then the product measure $\mu=\mu_1 \otimes \ldots \otimes \mu_n$ has convex property $(\tau)$ with $\vp(x)=\sum_{i=1}^n \vp_i(x_i)$, see \cite[Lemma 5]{M}. Therefore Theorem \ref{convtau} implies the following Corollary.

\begin{cor}\label{tensor}
Let $\mu_1,\ldots, \mu_n \in \mc{M}(h,\lambda)$ and let us take $\mu=\mu_1 \otimes \ldots \otimes \mu_n$. Define the cost function $\vp(x)=\sum_{i=1}^n \vp_0(x_i/C_\tau)$, where $C_\tau=17h/(1-\lambda)^2$. Then for any coordinate-wise convex function $f$ (in particular, for any convex function) we have
\begin{equation}\label{dimn}
	\left( \call{}{}{e^{f \square \vp}}{\mu} \right)\left( \call{}{}{e^{-f }}{\mu} \right) \leq 1.
\end{equation}
\end{cor}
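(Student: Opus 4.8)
The plan is to deduce Corollary~\ref{tensor} directly from Theorem~\ref{convtau} together with the known tensorization property of the convex property $(\tau)$ quoted from \cite[Lemma 5]{M}. First, I would observe that by the implication $(a)\Rightarrow(c)$ of Theorem~\ref{convtau}, each factor $\mu_i \in \mc{M}(h,\lambda)$ satisfies the convex exponential property $(\tau)$ with the common constant $C_\tau = 17h/(1-\lambda)^2$; that is, each $\mu_i$ satisfies \eqref{deftau} with cost function $\vp_i(t) = \vp_0(t/C_\tau)$ on $\R$. This gives $n$ one-dimensional inequalities with identical cost functions, which is exactly the input required to invoke tensorization.

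Next I would apply the tensorization lemma. Since each $\mu_i$ has convex property $(\tau)$ with cost $\vp_i$, \cite[Lemma 5]{M} asserts that the product $\mu = \mu_1 \otimes \cdots \otimes \mu_n$ has convex property $(\tau)$ with the separable cost function $\vp(x) = \sum_{i=1}^n \vp_i(x_i) = \sum_{i=1}^n \vp_0(x_i/C_\tau)$, which is precisely the function defined in the statement. The content of the tensorization lemma is that the one-dimensional inequalities multiply: one writes the $n$-dimensional infimum convolution $f\,\square\,\vp$ with a separable cost as an iterated sequence of one-dimensional infimum convolutions (the infimum over $y\in\R^n$ factors coordinate by coordinate), applies Fubini to integrate one variable at a time, and at each step uses the one-dimensional inequality \eqref{deftau} for the corresponding factor $\mu_i$. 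The coordinate-wise convexity of $f$ guarantees that each partial infimum convolution remains convex in the remaining free variables, so that \eqref{deftau} is legitimately applicable at every stage of the induction.

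The only point meriting care is the hypothesis on $f$: the one-dimensional inequality \eqref{deftau} requires convexity in the single active variable at each step, so the natural and correct hypothesis for the tensorized statement is coordinate-wise convexity, which is weaker than joint convexity and hence also covers every (jointly) convex $f$, as the parenthetical remark in the statement indicates. I would verify that fixing all but one coordinate of a coordinate-wise convex $f$ yields a convex function of that coordinate, and that the partial infimum convolution $g(x_1,\dots,x_{k}) = \inf_{y_k}\{f(\cdots,y_k,\cdots) + \vp_0((x_k-y_k)/C_\tau)\}$ preserves coordinate-wise convexity in the variables not yet integrated (this is the standard fact that infimum convolution of two convex functions is convex, applied slicewise).

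Since this is an immediate consequence of combining two results stated earlier, the argument presents no genuine obstacle; the main verification is simply that the common constant $C_\tau$ produced by $(a)\Rightarrow(c)$ is the same for all factors, which holds because $h$ and $\lambda$ are common to all $\mu_i$ by assumption, and that the tensorization lemma applies verbatim once the cost functions are identical. I would therefore present the proof as a short deduction: invoke Theorem~\ref{convtau}, invoke \cite[Lemma 5]{M}, and read off \eqref{dimn}.
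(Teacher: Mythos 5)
Your proof is correct and takes essentially the same route as the paper, which likewise obtains the corollary by combining the implication (a)$\Rightarrow$(c) of Theorem \ref{convtau} (giving each $\mu_i$ the convex exponential property $(\tau)$ with the common constant $C_\tau=17h/(1-\lambda)^2$) with the tensorization result of \cite[Lemma 5]{M}. The details you supply about the iterated one-dimensional infimum convolutions and the preservation of coordinate-wise convexity are precisely the content of that cited lemma, so nothing is missing.
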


\noindent As a consequence, one can deduce the two-level concentration for convex sets and convex functions in $\mb{R}^n$. 

\begin{cor}\label{corr1}
Let $\mu_1,\ldots, \mu_n \in \mc{M}(h,\lambda)$. Let $C_\tau = 17h/(1-\lambda)^2$. Take $\mu=\mu_1 \otimes \ldots \otimes \mu_n$. Then for any convex set $A$ with $\mu(A)>0$ we have
\[
	\mu\left(A+\sqrt{2t}C_\tau B_2^n + 2t C_\tau B_1^n \right) \geq 1- \frac{e^{-t}}{\mu(A)}.
\] 
\end{cor}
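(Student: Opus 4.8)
The plan is to derive the concentration statement of Corollary~\ref{corr1} from the tensorized property $(\tau)$ of Corollary~\ref{tensor} by combining the general concentration consequence of property $(\tau)$ (quoted in the introduction from Lemma~4 of~\cite{M}) with an explicit description of the enlargement set $\{\vp < t\}$. By Corollary~\ref{tensor}, $\mu=\mu_1\otimes\cdots\otimes\mu_n$ satisfies property $(\tau)$ with cost function $\vp(x)=\sum_{i=1}^n \vp_0(x_i/C_\tau)$, and for convex $A$ (so that the indicator-type test function involved remains within the admissible convex class) the abstract implication gives
\[
	\mu\left(\{x \notin A + \{\vp < t\}\}\right) \leq \frac{e^{-t}}{\mu(A)},
\]
which rearranges to $\mu(A + \{\vp < t\}) \geq 1 - e^{-t}/\mu(A)$. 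Thus the whole task reduces to showing the inclusion
\[
	\{\vp < t\} \subseteq \sqrt{2t}\,C_\tau B_2^n + 2t\,C_\tau B_1^n,
\]
after which monotonicity of $\mu$ under set enlargement finishes the proof.

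The key step is therefore a purely geometric decomposition lemma for the sublevel sets of $\vp$. First I would rescale: writing $y_i = x_i/C_\tau$, the condition $\vp(x)<t$ becomes $\sum_{i=1}^n \vp_0(y_i) < t$, so it suffices to prove $\{\sum_i \vp_0(y_i) < t\} \subseteq \sqrt{2t}\,B_2^n + 2t\,B_1^n$ and then scale back by $C_\tau$. The natural approach is to split each coordinate according to whether $|y_i|\leq 1$ or $|y_i|>1$: on the ``quadratic'' coordinates $\vp_0(y_i)=\tfrac12 y_i^2$ contributes an $\ell_2$ cost, while on the ``linear'' coordinates $\vp_0(y_i)=|y_i|-\tfrac12$ contributes (up to the harmless additive constant) an $\ell_1$ cost. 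Concretely I would write $y = u + v$ where $u$ carries the small coordinates (those with $|y_i|\le 1$) and $v$ carries the large ones; then $|u|_2^2 = \sum_{|y_i|\le1} y_i^2 \le 2\sum \vp_0(y_i) < 2t$ gives $u \in \sqrt{2t}\,B_2^n$, and $|v|_1 = \sum_{|y_i|>1} |y_i| \le \sum_{|y_i|>1}(2\vp_0(y_i)) \le 2t$ (using $|y_i| = \vp_0(y_i)+\tfrac12 \le 2\vp_0(y_i)$ when $|y_i|>1$, since then $\vp_0(y_i)\ge\tfrac12$) gives $v \in 2t\,B_1^n$. This yields exactly the claimed Minkowski-sum inclusion.

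I expect the main obstacle to be not the geometry but the justification that the abstract property~$(\tau)$ implication applies to \emph{convex} sets $A$ when $\mu$ only satisfies the \emph{convex} version of~$(\tau)$. The inequality~\eqref{dimn} holds only for coordinate-wise convex $f$, so in reproducing the Markov-type argument behind Lemma~4 of~\cite{M} one must choose the test function $f$ to be (a smoothed or limiting version of) something like $f = t\cdot\1_{A^c}$ or its convex-compatible surrogate, and check that $-f$ and $f\square\vp$ interact with convexity of $A$ in a way that keeps the relevant functions coordinate-wise convex. Since $A$ is convex, $\1_{A^c}$ is quasi-convex but not convex, so the cleanest route is to run the argument with $f$ built from the convex function $x \mapsto t\cdot\mathbf{1}_{\{x \notin A + \{\vp<t\}\}}$ handled via approximation, or simply to invoke the convex analogue of Lemma~4 directly as the authors presumably intend. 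Once that implication is granted, the remaining steps are the elementary rescaling and the Minkowski-sum inclusion above, both of which are routine. Finally, by the equivalence and constant bookkeeping in Theorem~\ref{convtau}, the cost-function scale is exactly $C_\tau = 17h/(1-\lambda)^2$, matching the statement.
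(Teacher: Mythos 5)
Your geometric half is exactly the paper's: splitting the coordinates according to $|x_i|\le C_\tau$ versus $|x_i|>C_\tau$, bounding the small-coordinate part in $\ell_2$ via the quadratic branch and the large-coordinate part in $\ell_1$ via $|y_i|\le 2\vp_0(y_i)$, and rescaling, is the same decomposition the paper uses, and it correctly yields $B_\vp(t)\subseteq \sqrt{2t}\,C_\tau B_2^n+2t\,C_\tau B_1^n$. The gap is in the step you yourself flag as the main obstacle, and neither of your proposed ways around it closes it. The test functions you name, $t\cdot\1_{A^c}$ and $t\cdot\1_{\{x\notin A+\{\vp<t\}\}}$, are both (up to the factor $t$) indicators of complements of convex sets, hence quasi-convex but \emph{not} convex --- the second one, which you call ``the convex function'', has exactly the same defect as the first --- so the convex inequality \eqref{dimn} cannot be applied to either of them. (Moreover, even if the second were admissible, a test function vanishing on $A+\{\vp<t\}$ rather than on $A$ would only control the measure of the doubly enlarged set $A+\{\vp<t\}+\{\vp<t\}$, which does not give the stated constants.) Finally, ``invoking the convex analogue of Lemma 4 of \cite{M} directly'' is circular: that analogue is precisely the statement that has to be proved here; Lemma 4 of \cite{M} itself is proved with a non-convex test function and is not available in the convex setting.

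The missing idea is the paper's Proposition \ref{gencon} (adapted from Proposition 2.4 of \cite{LW}): take $f=0$ on $A$ and $f=+\infty$ off $A$. Because $A$ is convex, this extended-valued $f$ is convex, and it is the increasing limit of the finite convex functions $f_n(x)=n\dist(x,A)$, to which \eqref{dimn} does apply. For this $f$ one has $\int e^{-f}\,\dd\mu=\mu(A)$, and if $x\notin A+B_\vp(t)$ with $B_\vp(t)=\{\vp\le t\}$, then $(f\Box\vp)(x)=\inf_{y\in A}\vp(x-y)>t$; hence Markov's inequality together with \eqref{dimn} gives $e^t\,\mu\bigl(\R^n\setminus(A+B_\vp(t))\bigr)\,\mu(A)\le 1$, i.e. $\mu(A+B_\vp(t))\ge 1-e^{-t}/\mu(A)$. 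With this proposition in place, your inclusion $B_\vp(t)\subseteq \sqrt{2t}\,C_\tau B_2^n+2t\,C_\tau B_1^n$ finishes the proof exactly as in the paper.
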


\begin{cor}\label{corr2}
Let $\mu_1,\ldots, \mu_n \in \mc{M}(h,\lambda)$. Let $C_\tau = 17h/(1-\lambda)^2$. Take $\mu=\mu_1 \otimes \ldots \otimes \mu_n$. Then for any convex function $f:\mb{R}^n \to \mb{R}$ with
\[
	|f(x)-f(y)|_2 \leq a |x-y|_2, \qquad |f(x)-f(y)|_1 \leq b |x-y|_1, \quad x,y \in \mb{R}^n,   
\]
we have
\begin{equation}\label{ineq1}
	\mu\left( \left\{  f > \med_\mu f +  C_\tau t  \right\} \right)  \leq 2 \exp\left( -\frac18 \min\left\{\frac{t}{b}, \frac{t^2}{a^2}\right\} \right), \qquad t \geq 0,
\end{equation}
and
\begin{equation}\label{ineq2}
	\mu\left( \left\{  f < \med_\mu f -  C_\tau t   \right\} \right)  \leq 2 \exp\left( -\frac18 \min\left\{\frac{t}{b}, \frac{t^2}{a^2}\right\} \right), \qquad t \geq 0.
\end{equation}
\end{cor}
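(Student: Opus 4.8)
The plan is to read off both deviation inequalities from the convex-set concentration already recorded in Corollary \ref{corr1}, combined with the two Lipschitz hypotheses on $f$; this is cleaner than re-optimizing the Laplace transform of $f$ by hand, and it is legitimate since Corollary \ref{corr1} precedes the statement.

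The first step is a purely deterministic inclusion. For a level $c \in \mb{R}$ and radii $r_1, r_2 \geq 0$ I claim
\[
	\{f \leq c\} + r_2 B_2^n + r_1 B_1^n \subseteq \{f \leq c + a r_2 + b r_1\}.
\]
Indeed, any point of the left-hand side is $x = y + u + v$ with $f(y) \leq c$, $|u|_2 \leq r_2$ and $|v|_1 \leq r_1$, and splitting the increment in two steps gives $f(x) - f(y) = \big(f(y+v+u) - f(y+v)\big) + \big(f(y+v) - f(y)\big) \leq a|u|_2 + b|v|_1 \leq a r_2 + b r_1$, where the first step uses the $\ell_2$-Lipschitz bound and the second the $\ell_1$-Lipschitz bound.

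For the upper tail I set $m = \med_\mu f$ and take $A = \{f \leq m\}$, which is convex as a sublevel set of the convex function $f$ and satisfies $\mu(A) \geq \frac12$. Applying Corollary \ref{corr1} with $r_2 = \sqrt{2t}\,C_\tau$ and $r_1 = 2t C_\tau$, and then the inclusion above, yields $\mu\big(f > m + a\sqrt{2t}\,C_\tau + 2bt C_\tau\big) \leq \mu(A)^{-1} e^{-t} \leq 2 e^{-t}$. It then remains to convert the parametrization: writing $\tau = a\sqrt{2t} + 2bt$, so that the deviation equals $C_\tau \tau$, and using $\tau \leq 2\max\{a\sqrt{2t}, 2bt\}$, one checks that in either branch of the maximum $t \geq \frac18 \min\{\tau^2/a^2, \tau/b\}$; relabelling $\tau$ as $t$ gives \eqref{ineq1}.

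The lower tail is the delicate point, and I expect it to be the main obstacle, because the superlevel set $\{f \geq m\}$ is in general not convex, so Corollary \ref{corr1} cannot be applied to it directly. Instead I bound the convex sublevel set $B = \{f \leq m - C_\tau t\}$ by a reverse (contrapositive) argument. If $\mu(B) = 0$ there is nothing to prove; otherwise, for every $t' > 0$ with $a\sqrt{2t'}\,C_\tau + 2bt' C_\tau < C_\tau t$, the inclusion gives $B + \sqrt{2t'}\,C_\tau B_2^n + 2t' C_\tau B_1^n \subseteq \{f < m\}$, a set of measure at most $\frac12$ by definition of the median. Since Corollary \ref{corr1} forces the enlarged set to have measure at least $1 - \mu(B)^{-1} e^{-t'}$, we get $\mu(B) \leq 2 e^{-t'}$; letting $t'$ increase to the solution of $a\sqrt{2t'} + 2bt' = t$ and running the same elementary algebra as above yields \eqref{ineq2}. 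Apart from this reverse step, the whole argument reduces to the two-step Lipschitz splitting and the inequality $\tau \leq 2\max\{a\sqrt{2t}, 2bt\}$.
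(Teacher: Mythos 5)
Your proof is correct and takes essentially the same route as the paper: both tails are deduced from Corollary \ref{corr1} applied to convex sublevel sets of $f$ (the set $\{f \leq \med_\mu f\}$ for \eqref{ineq1}, and for \eqref{ineq2} the contrapositive trick of Talagrand that the paper cites as [T], forcing $\mu(B) \leq 2e^{-t'}$ because the enlargement of $B$ stays inside $\{f < \med_\mu f\}$), combined with the same Lipschitz-splitting inclusion and an equivalent reparametrization. The only cosmetic difference is your $\tau \leq 2\max\{a\sqrt{2t},2bt\}$ shortcut in place of the paper's two-case estimate, which yields the same factor $\frac18$.
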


Let us mention that very recently Adamczak and Strzelecki established related results in the context of modified log-Sobolev inequalities, see \cite{ASt}. For simplicity we state their result in the case of symmetric measures. For $\lambda \in [0,1)$, $\beta \in [0,1]$ and $h,m>0$ the authors defined the class of measures $\mc{M}_{AS}^\beta(h,\lambda,m)$ satisfying the condition $\mu([x+h/x^\beta,\infty) \leq \lambda \mu([x,\infty))$ for $x \geq m$. Note that $\mc{M}_{AS}^0(h,\lambda,0)=M(h,\lambda)$. They proved that products $\mu=\mu_1\otimes \ldots \otimes \mu_n$ of such measures satisfy the inequality
\begin{equation}\label{AS}
	\ent_\mu(e^f) \leq C_{AS} \call{}{}{e^f \left( |\nabla f|^2 \lor |\nabla f |_{\scriptscriptstyle{\frac{\beta+1}{\beta}}}^{\frac{\beta+1}{\beta}} \right)  }{\mu}. 
\end{equation}
for any smooth convex function $f:\mb{R} \to \mb{R}$. As a consequence, for any convex set $A$ in $\mb{R}^n$ with $\mu(A) \geq 1/2$ we have
\[
	\mu\left(A+ t^{\frac12} B_2^n + t^\frac{1}{1+\beta} C_\tau B_{1+\beta}^n \right) \geq 1- e^{-C_{AS}'t}, \quad t \geq 0.
\]
Here the constants $C_{AS},C_{AS}'$ depend only on the parameters $\beta,m,h$ and $\lambda$. They also established inequality similar to \eqref{ineq1}, namely for a convex function $f$ with  
\[
	|f(x)-f(y)|_2 \leq a |x-y|_2, \qquad |f(x)-f(y)|_{1+\beta} \leq b |x-y|_{1+\beta}, \quad x,y \in \mb{R}^n,   
\]
one gets
\[
\mu\left( \left\{  f > \med_\mu f +  2t  \right\} \right)  \leq 2 \exp\left( -\frac{3}{16} \min\left\{\frac{t^{1+ \beta}}{b^{1+\beta} C_{AS}^\beta}, \frac{t^2}{a^2 C_{AS}}\right\} \right), \qquad t \geq 0.
\]
However, the authors were not able to get \eqref{ineq2}. In fact one can show that for $\beta=0$ our Theorem \ref{convtau} is stronger than \eqref{AS}. In particular, the inequality \eqref{AS} is equivalent to 
\[
	\call{}{}{e^{f \Box \vp}}{\mu} \leq e^{\call{}{}{f}{\mu}},
\]
see \cite{AS}, which easily follows from \eqref{dimn}.

The rest of this article is organized as follows. In the next section we prove Theorem \ref{convtau}. In Section \ref{corro} we deduce Corollaries \ref{corr1} and \ref{corr2}.

\section{Proof of Theorem \ref{convtau}}\label{sec2}

We need the following lemma, which is essentially included in Theorem 4.2 of \cite{BG}. For reader's convenience we provide a straightforward proof of this fact.
 
\begin{lem}\label{bob}
Let $\mu \in \mc{M}^+(h, \lambda)$ and let $g:\mb{R} \to [0,\infty)$ be non-decreasing with $g(0)=0$. Then
\[
	\call{}{}{g^2}{\mu} \leq  \left(\frac{2}{1-\lambda}\right)^2  \call{}{}{  (g(x)-g(x-h))^2  }{\mu(x)} . 
\]
\end{lem}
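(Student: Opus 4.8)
The plan is to avoid telescoping the increment $g(x)-g(x-h)$ (which is not monotone and interacts badly with the tail condition) and instead combine a single Cauchy--Schwarz step with a one-step shift estimate. Write $S=\call{}{}{g^2}{\mu}$ and $D=\call{}{}{(g(x)-g(x-h))^2}{\mu(x)}$. By truncating $g$ to $g\wedge M$ (which keeps $g$ non-decreasing and vanishing at $0$, and only decreases the integrand of $D$, since $0\le (g\wedge M)(x)-(g\wedge M)(x-h)\le g(x)-g(x-h)$) and letting $M\to\infty$ by monotone convergence, I may assume $g$ is bounded, so that $S<\infty$.

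The key ingredient is the shift estimate
\[
\call{}{}{g(x-h)^2}{\mu(x)} \le \lambda \call{}{}{g^2}{\mu}.
\]
Since $g\ge0$ is non-decreasing with $g(0)=0$, it vanishes on $(-\infty,0]$, so for every $t>0$ the super-level set $\{g^2>t\}$ is a half-line with left endpoint $a_t\ge0$. Hence $\{x:g(x-h)^2>t\}$ is the same half-line shifted right by $h$, and the defining property of $\mc{M}^+(h,\lambda)$, applied at the point $a_t\ge0$, gives $\mu(\{g(x-h)^2>t\})\le\lambda\,\mu(\{g^2>t\})$. Integrating in $t$ via the layer-cake formula yields the displayed estimate.

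Now set $d(x):=g(x)-g(x-h)$ and expand
\[
\call{}{}{g(x)\,d(x)}{\mu(x)} = S - \call{}{}{g(x)g(x-h)}{\mu(x)},
\]
bounding the cross term by Cauchy--Schwarz together with the shift estimate:
\[
\call{}{}{g(x)g(x-h)}{\mu(x)} \le S^{1/2}\left(\call{}{}{g(x-h)^2}{\mu(x)}\right)^{1/2} \le \sqrt{\lambda}\,S.
\]
This gives $\call{}{}{g\,d}{\mu}\ge(1-\sqrt\lambda)S$. On the other hand, Cauchy--Schwarz directly yields $\call{}{}{g\,d}{\mu}\le S^{1/2}D^{1/2}$. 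Combining, $(1-\sqrt\lambda)S\le S^{1/2}D^{1/2}$, and dividing by $S^{1/2}$ (the case $S=0$ being trivial) gives $S\le (1-\sqrt\lambda)^{-2}D$. Finally $(1-\sqrt\lambda)^{-2}\le 4(1-\lambda)^{-2}=\left(\tfrac{2}{1-\lambda}\right)^2$, since $1-\lambda=(1-\sqrt\lambda)(1+\sqrt\lambda)$ and $1+\sqrt\lambda\le2$; this is the claimed inequality, in fact with a slightly better constant.

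The one real subtlety, and the step I expect to be the main obstacle to state cleanly, is the shift estimate: it is essential that the shift is applied to the monotone function $g$ (equivalently $g^2$), whose super-level sets are half-lines $[a_t,\infty)$ on which the condition $\mu[x+h,\infty)\le\lambda\mu[x,\infty)$ bites; the analogous bound fails for the non-monotone increment $d$, which is precisely why the Cauchy--Schwarz reorganization above, isolating a single shift of $g$, is the crux. Minor care with open versus closed half-lines and with $\mu$-null boundary points is handled by the right-continuity of $x\mapsto\mu[x,\infty)$.
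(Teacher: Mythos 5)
Your proof is correct, and it differs from the paper's in a meaningful way, though both rest on the same key ingredient. The paper first proves the shift estimate $\int g(x-h)\,\dd\mu(x)\le\lambda\int g\,\dd\mu$ for all non-decreasing $g\ge0$ with $g(0)=0$ (by writing $g$ as a mixture of indicators $\1_{[a,\infty)}$, $a\ge 0$ --- the same mechanism as your layer-cake argument, so that step is shared). It then applies this to $g^2$ in the subtracted form $(1-\lambda)\int g^2\,\dd\mu\le\int\bigl(g(x)^2-g(x-h)^2\bigr)\dd\mu(x)$, factors the difference of squares as $\bigl(g(x)-g(x-h)\bigr)\bigl(g(x)+g(x-h)\bigr)$, and uses one Cauchy--Schwarz together with $g(x-h)\le g(x)$ to get the constant $4(1-\lambda)^{-2}$. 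You instead keep the shifted term as a cross term $\int g(x)g(x-h)\,\dd\mu(x)$, control it by Cauchy--Schwarz combined with the shift estimate (picking up $\sqrt{\lambda}$ rather than $\lambda$), and then compare the two bounds on $\int g\,(g(x)-g(x-h))\,\dd\mu$. This reorganization buys you the constant $(1-\sqrt{\lambda})^{-2}=(1+\sqrt{\lambda})^2(1-\lambda)^{-2}\le 4(1-\lambda)^{-2}$, genuinely sharper for $\lambda<1$, and your truncation step $g\wedge M$ also handles the case $\int g^2\,\dd\mu=\infty$ explicitly, a point the paper glosses over when it divides both sides by $\bigl(\int g^2\,\dd\mu\bigr)^{1/2}$. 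Since the statement is only ever used with the constant $4(1-\lambda)^{-2}$, the improvement does not propagate to the main theorem, but your argument is a valid and slightly stronger substitute.
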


\begin{proof}
We first prove that
\[
	\lambda \call{}{}{g(x)}{\mu(x)} \geq \call{}{}{g(x-h)}{\mu(x)}
\]
for any non-decreasing $g:\mb{R} \to [0,\infty)$ such that $g(0)=0$. Both sides of this inequality are linear in $g$. Therefore, it is enough to consider only functions of the form $g(x)=\1_{[a,\infty)}(x)$ for $a \geq 0$, since $g$ can be expressed as a mixture of these functions. For $g(x)=\1_{[a,\infty)}(x)$ the above inequality reduces to
$\lambda \mu[a,\infty) \geq \mu[a+h,\infty)$, 
which is clearly true due to our assumption on $\mu$.

The above is equivalent to
\begin{equation}\label{wq}
	(1-\lambda) \call{}{}{g(x)}{\mu(x)} \leq	\call{}{}{(g(x)-g(x-h))}{\mu(x)} .
\end{equation}
Now, let us use \eqref{wq} with $g^2$ instead of $g$. Then,
\begin{align*}
	\call{}{}{g^2}{\mu} & \leq \frac{1}{1-\lambda} 	\call{}{}{(g(x)^2-g(x-h)^2)}{\mu(x)} \\
	& = \frac{1}{1-\lambda} 	\call{}{}{(g(x)-g(x-h))(g(x)+g(x-h))}{\mu(x)} \\
	& \leq \frac{1}{1-\lambda} 	\left(\call{}{}{(g(x)-g(x-h))^2}{\mu(x)} \right)^{1/2} 	\left(\call{}{}{(g(x)+g(x-h))^2}{\mu(x)} \right)^{1/2} \\
	& \leq \frac{2}{1-\lambda} 	\left(\call{}{}{(g(x)-g(x-h))^2}{\mu(x)} \right)^{1/2} 	\left(\call{}{}{g(x)^2}{\mu(x)} \right)^{1/2}
\end{align*}
We arrive at
\[
	\left( \call{}{}{g^2}{\mu} \right)^{1/2} \leq \frac{2}{1-\lambda} \left(\call{}{}{(g(x)-g(x-h))^2}{\mu(x)} \right)^{1/2}. 
\]
Our assertion follows.
\end{proof}

In the rest of this note, we take $f:\mb{R} \to \mb{R}$ to be convex. Let $x_0$ be a point where $f$ attains its minimal value. Note that this point may not be unique. However, one can check that what follows does not depend on the choice of $x_0$. Moreover, if $f$ is increasing (decreasing) we adopt the notation $x_0=-\infty$ ($x_0=\infty$). Let us define a discrete version of gradient of $f$,
\[
	(Df)(x) = \left\{ \begin{array}{ll}
	f(x)-f(x-h) & x > x_0+h \\
	f(x)-f(x_0) & x \in [x_0-h,x_0+h] \\
	f(x)-f(x+h) & x < x_0-h
\end{array}	 \right. .
\] 

\begin{lem}\label{bobex}
Let $f:\mb{R} \to \mb{R}$ be a convex function with $f(0)=0$ and let $\mu \in \mc{M}(h,\lambda)$. Then 
\[
	 \call{}{}{\left(e^{f/2} - e^{-f/2}  \right)^2}{\mu} \leq \frac{8}{(1-\lambda)^2} \call{}{}{ e^{f(x)} ((Df)(x))^2  }{\mu(x)}.
\]
\end{lem}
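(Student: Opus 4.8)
The plan is to reduce everything to Lemma \ref{bob} by splitting $\R$ at the symmetry point $0$ and choosing, on each half, one or two monotone functions to which Lemma \ref{bob} applies. First I would normalize the position of the minimum: replacing $f(x)$ by $f(-x)$ leaves both sides of the asserted inequality unchanged — the left-hand side because $\mu$ is symmetric, the right-hand side because one checks that the discrete gradient of $x\mapsto f(-x)$ is exactly $(Df)(-x)$, so that after the symmetric change of variables $\call{}{}{e^{f(-x)}((Df)(-x))^2}{\mu(x)}$ equals the original right-hand side. Hence I may assume $x_0\le 0$, so that $f$ is nondecreasing and nonnegative on $[0,\infty)$. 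I would then write $\call{}{}{(e^{f/2}-e^{-f/2})^2}{\mu}=\int_{[0,\infty)}+\int_{(-\infty,0]}$ and treat the two halves by applying Lemma \ref{bob} to $\mu$ restricted to the relevant half-line (such a restriction belongs to $\mc{M}^+(h,\lambda)$, and the inequality of Lemma \ref{bob} is insensitive to the normalization of the measure).

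On the right half the function $g(x)=e^{f(x)/2}-e^{-f(x)/2}$ is nonnegative, nondecreasing and vanishes at $0$, so Lemma \ref{bob} gives $\int_{[0,\infty)}g^2\,\mathrm{d}\mu\le \frac{4}{(1-\lambda)^2}\int_{[0,\infty)}(g(x)-g(x-h))^2\,\mathrm{d}\mu(x)$. To close this half I would bound the discrete gradient pointwise: by the mean value theorem $g(x)-g(x-h)=\cosh(\xi/2)\,(f(x)-f(x-h))$ for some $\xi$ between $f(x-h)$ and $f(x)$, and since $0\le \xi\le f(x)$ one has $\cosh(\xi/2)\le e^{f(x)/2}$; identifying $f(x)-f(x-h)$ with $(Df)(x)$ (using convexity and the definition of $Df$ near the minimum) yields $(g(x)-g(x-h))^2\le e^{f(x)}((Df)(x))^2$. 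This disposes of $\int_{[0,\infty)}$ with constant $4/(1-\lambda)^2$.

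For the left half, after the reflection $y=-x$, the task becomes the same estimate for the convex function $F(y)=f(-y)$, which is now $V$-shaped on $[0,\infty)$ with minimum $-m$ at $p=-x_0$. Here I would split $(e^{F/2}-e^{-F/2})^2\le g_1^2+g_2^2$, where $g_2(y)=e^{F^+\!/2}-e^{-F^+\!/2}$ captures the increasing tail $\{F\ge0\}$ (handled exactly as on the right half, again with weight $e^{F}$), and $g_1$ is the nondecreasing function equal to $e^{|F|/2}-e^{-|F|/2}$ on the descending part $[0,p]$ and constant equal to its peak value $e^{m/2}-e^{-m/2}$ afterwards, so that $g_1^2$ dominates $(e^{F/2}-e^{-F/2})^2$ throughout the valley $\{F\le 0\}$. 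Applying Lemma \ref{bob} to each of $g_1,g_2$ and summing introduces a factor $2$, so that this half is controlled by $\frac{8}{(1-\lambda)^2}\int_{(-\infty,0]}e^{f}(Df)^2\,\mathrm{d}\mu$; combined with the right half this gives the claimed constant $8/(1-\lambda)^2$.

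The hard part will be the valley $\{F<0\}$, that is, the estimate for $g_1$. There the natural weight $e^{F}$ is at most $1$, whereas $(e^{F/2}-e^{-F/2})^2$ can be as large as $(e^{m/2}-e^{-m/2})^2$; since the depth $m$ is arbitrary, no nondecreasing function with an $e^{F/2}$-controlled increment can dominate it locally, so the pointwise gradient argument used on the tail cannot work here. The resolution I would pursue is to exploit the genuinely non-local (Hardy-type) content of Lemma \ref{bob}: the plateau mass of $g_1$ is charged by the lemma against the increments of $g_1$, and those increments — supported where $F\le 0$ and carrying the wrong exponential $e^{-F}$ — must be transferred to the increasing tail, where $e^{F}$ is large, by matching each negative level of $f$ with its positive counterpart through the symmetry of $\mu$ together with the geometric decay $\mu[x+h,\infty)\le\lambda\mu[x,\infty)$. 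Equivalently, one reruns the one-sided weighted estimate underlying Lemma \ref{bob} with the two weights $e^{-F}$ and $e^{F}$, and it is this step, rather than the bookkeeping above, that I expect to require the real care.
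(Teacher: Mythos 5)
Your reduction (reflect so that $x_0\le 0$, split into monotone pieces, apply Lemma \ref{bob} together with the mean value theorem bound $\cosh(\xi/2)\le e^{f(x)/2}$ on the increasing part) parallels the paper's treatment of the monotone case, and your constant bookkeeping is consistent with the final $8/(1-\lambda)^2$. But the proposal has a genuine gap, and you have located it yourself: the valley $\{f<0\}$, where the left-hand integrand is of order $e^{|f|}$ while the available weight $e^{f}=e^{-|f|}$ is small. Applying Lemma \ref{bob} to your $g_1$ produces increments carrying the weight $e^{-f}$ rather than $e^{f}$, so that application cannot close the estimate; your proposed fix (``rerun the one-sided weighted estimate underlying Lemma \ref{bob} with the two weights $e^{-F}$ and $e^{F}$'', matching negative levels to positive ones) is precisely the unproved step, and it is not how the difficulty is actually resolved.

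The missing idea is twofold, and neither half requires touching the interior of Lemma \ref{bob}. First, for a non-decreasing convex $f$ with $f(0)=0$, convexity gives the pointwise inequality $|f(-x)|\le f(x)$ for $x\ge 0$; combined with the symmetry of $\mu$ this shows that the left-hand integral over $(-\infty,0]$ (the bad region, where $f\le 0$) is dominated by the integral over $[0,\infty)$ (the good region), so the whole estimate reduces to the region where $f\ge 0$ and the pointwise gradient argument works --- this is the paper's Step 2. Second, for non-monotone $f$ one does not estimate the valley by any gradient-type bound at all: one replaces $f$ by its monotone flattening $f_1=\max\{f,f(x_0)\}$, equal to $f$ on $[x_0,\infty)$ and constant $f(x_0)$ to the left. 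Since $f_1$ is convex, non-decreasing and vanishes at $0$, the monotone case applies to it, and its plateau contributes the single term $(e^{f(x_0)/2}-e^{-f(x_0)/2})^2\,\mu((-\infty,x_0])$ to the left-hand side of that estimate (this is \eqref{eqq1}); because $|f|\le |f(x_0)|$ on $[y_0,x_0]$, this one term dominates the entire valley integral (this is \eqref{eqq3}). In other words, the valley is absorbed into the flat part of $f_1$ and charged, via the convexity-plus-symmetry inequality, against the increasing side where $e^{f}\ge 1$; no two-weight variant of Lemma \ref{bob} is needed. Without this (or an equivalent) device, your argument does not close.
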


\begin{proof}
\textit{Step 1.} We first assume that $f$ is non-negative and non-decreasing. It follows that $f(x)=0$ for $x \leq 0$.  Correspondingly, the function $g=e^{f/2}-e^{-f/2}$ is non-negative, non-decreasing and $g(0)=0$. Moreover, let $\mu^+$ be the normalized restriction of $\mu$ to $\mb{R}_+$. Note that $\mu^+ \in \mc{M}^+(h,\lambda)$. From Lemma \ref{bob} we get
\begin{align*}
	\call{}{}{\left(e^{f/2} - e^{-f/2}  \right)^2}{\mu} & = \frac12 \call{}{}{g^2}{\mu^+} \leq \frac{2}{(1-\lambda)^2} \call{}{}{  (g(x)-g(x-h))^2  }{\mu^+(x)} \\
	& =  \frac{4}{(1-\lambda)^2} \call{}{}{  (g(x)-g(x-h))^2  }{\mu(x)}.
\end{align*}
Observe that
\begin{align*}
	g(x)-g(x-h) & = e^{\frac{f(x)}{2}} - e^{-\frac{f(x)}{2}} -e^{\frac{f(x-h)}{2}}  + e^{-\frac{f(x-h)}{2}} \\
	& = \left( e^{\frac{f(x)}{2}}-e^{\frac{f(x-h)}{2}} \right)  \left( 1 +e^{-\frac{f(x)}{2} -\frac{f(x-h)}{2}} \right) \\
	& \leq 2\left(e^{\frac{f(x)}{2}}-e^{\frac{f(x-h)}{2}} \right) \leq  e^{\frac{f(x)}{2}} (f(x)-f(x-h)),
\end{align*}
where the last inequality follows from the mean value theorem. We arrive at
\[
	 \call{}{}{\left(e^{f/2} - e^{-f/2}  \right)^2}{\mu} \leq  \left(\frac{2}{1-\lambda}\right)^2 \call{}{}{ e^{f(x)} ((Df)(x))^2  }{\mu(x)}.
\]

\textit{Step 2.} Now let $f$ be non-decreasing but not necessarily non-negative. From convexity of $f$ and the fact that $f(0)=0$ we get $|f(-x)| \leq f(x)$ for $x \geq 0$. This implies the inequality $|e^{f(-x)}-e^{-f(-x)}| \leq |e^{f(x)}-e^{-f(x)}|$, $x \geq 0$. From the symmetry of $\mu$ one gets
\[
	\call{}{}{\left(e^{f/2} - e^{-f/2}  \right)^2}{\mu} \leq \call{}{}{\left(e^{f/2} - e^{-f/2}  \right)^2}{\mu^+}.
\]
Let $\tilde{f}=f\1_{[0,\infty)}$. From Step 1 one gets
\begin{align*}
	\call{}{}{\left(e^{f/2} - e^{-f/2}  \right)^2}{\mu^+} & = 2 \call{}{}{\left(e^{\tilde{f}/2} - e^{-\tilde{f}/2}  \right)^2}{\mu} \\
	& \leq \frac{8}{(1-\lambda)^2} \call{}{}{ e^{\tilde{f}(x)} ((D\tilde{f})(x))^2  }{\mu(x)} \\
	& \leq \frac{8}{(1-\lambda)^2} \call{}{}{ e^{f(x)} ((Df)(x))^2  }{\mu(x)}.
\end{align*}

\textit{Step 3.} The conclusion of Step 2 is also true in the case of non-increasing functions with $f(0)=0$. This is due to the invariance of our assertion under the symmetry $x \to -x$, which is an easy consequence of the symmetry of $\mu$ and the fact that for $F(x)=f(-x)$ we have $(DF)(x) = (Df)(-x)$.

\textit{Step 4.} Let us now eliminate of the assumption of monotonicity of $f$. Suppose that $f$ is not monotone. Then $f$ has a (not necessarily unique) minimum attained at some point $x_0\in \mb{R}$. Due to the remark of Step 3 we can assume that $x_0 \leq 0$. Since $f(0)=0$, we have $f(x_0)<0$. Take $y_0=\inf\{y \in \mb{R}: \ f(y)=0\}$. Clearly $y_0\leq x_0$. We define
\[
	f_1(x) = \left\{ \begin{array}{ll}
	f(x) & x \geq x_0 \\
	f(x_0) & x < x_0 
\end{array}	 \right. , \qquad
f_2(x) = \left\{ \begin{array}{ll}
	0 & x \geq y_0 \\
	f(x) & x < y_0 
\end{array}	 \right. .
\]
Note that $f_1$ is non-decreasing and $f_2$ is non-increasing. Moreover, $f_1(0)=f_2(0)=0$. Therefore, from the previous steps applied for $f_1$ and $f_2$ we get
\begin{equation}\label{eqq1}
(e^{f(x_0)/2}-e^{-f(x_0)/2})^2 \mu((-\infty,x_0]) + \call{x_0}{+\infty}{\left(e^{f/2} - e^{-f/2}  \right)^2}{\mu}  \leq  \frac{8}{(1-\lambda)^2} \call{x_0}{+\infty}{ e^{f(x)} ((Df)(x))^2  }{\mu(x)} 
\end{equation}
and
\begin{equation}\label{eqq2}
\call{-\infty}{y_0}{\left(e^{f/2} - e^{-f/2}  \right)^2}{\mu}	\leq  \frac{8}{(1-\lambda)^2} \call{-\infty}{y_0}{ e^{f(x)} ((Df)(x))^2  }{\mu(x)} .
\end{equation}
Moreover, since $|f(x)| \leq |f(x_0)|$ on $[y_0,x_0]$, we have
\begin{equation}\label{eqq3}
	\call{y_0}{x_0}{\left(e^{f/2} - e^{-f/2}  \right)^2}{\mu} \leq (e^{f(x_0)/2}-e^{-f(x_0)/2})^2 \mu([y_0,x_0]) \leq  (e^{f(x_0)/2}-e^{-f(x_0)/2})^2 \mu((-\infty,x_0]).
\end{equation}
Combining \eqref{eqq1}, \eqref{eqq2} and \eqref{eqq3}, we arrive at
\[
	\call{}{}{\left(e^{f/2} - e^{-f/2}  \right)^2}{\mu} \leq \frac{8}{(1-\lambda)^2} \call{}{}{ e^{f(x)} ((Df)(x))^2  }{\mu(x)} .
\]
\end{proof}

\noindent The following lemma provides an estimate on the infimum convolution.
\begin{lem}\label{bound}
Let $C_1,h>0$. Define $\vp_1(x)=\frac{1}{C_1}\vp_0(x/h)$. Assume that a convex function $f$ satisfies $|f'| \leq 1/(C_1 h)$. Then 
\[
	(f \Box \vp_1)(x) \leq f(x) - \frac{C_1}{2} |(Df)(x)|^2.
\]
\end{lem}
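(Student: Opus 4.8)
The plan is to bound the infimum convolution $(f\Box\vp_1)(x)=\inf_y\{f(y)+\vp_1(x-y)\}$ by exhibiting a single well-chosen competitor $y$ lying between $x$ and the reference point used to define $(Df)(x)$, and then to optimize over the distance $t=|x-y|$. Write $z$ for that reference point, so that $z=x-h$ when $x>x_0+h$, $z=x_0$ when $x\in[x_0-h,x_0+h]$, and $z=x+h$ when $x<x_0-h$; in every case $(Df)(x)=f(x)-f(z)$, the point $z$ lies on the side of $x$ pointing toward the minimizer $x_0$, and $f$ is convex and monotone on the segment joining $x$ to $z$. Set $d=|x-z|\in(0,h]$ and $a=(Df)(x)\ge0$. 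The Lipschitz assumption $|f'|\le 1/(C_1h)$ immediately gives $a\le d/(C_1h)\le 1/C_1$, which is exactly the bound that makes the final optimization succeed.

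Next I would take $y$ on the segment from $x$ toward $z$ with $|x-y|=t$ for $t\in[0,d]$. Since $f$ is convex, the chord bound yields $f(y)\le f(x)-\frac{t}{d}a$, while $|x-y|=t\le d\le h$ forces $\vp_1(x-y)=\frac{t^2}{2C_1h^2}$ (the quadratic branch of $\vp_0$). Hence
\[
(f\Box\vp_1)(x)\le f(x)+\min_{t\in[0,d]}\left(-\frac{ta}{d}+\frac{t^2}{2C_1h^2}\right),
\]
and it remains to show that the minimum of this scalar quadratic is at most $-\frac{C_1}{2}a^2$.

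The heart of the argument is this one-variable estimate. The unconstrained minimizer is $t^{\ast}=C_1h^2a/d$. When $t^{\ast}\le d$ (equivalently $a\le d^2/(C_1h^2)$) the minimum equals $-\frac{C_1h^2a^2}{2d^2}$, which is $\le-\frac{C_1}{2}a^2$ precisely because $d\le h$. The delicate case, and the main obstacle, is $t^{\ast}>d$: then the constrained minimum is attained at the endpoint $t=d$, giving value $\frac{d^2}{2C_1h^2}-a$, and the inequality $t^{\ast}>d$ forces $\frac{d^2}{2C_1h^2}<\frac{a}{2}$, so this endpoint value is strictly below $-\frac{a}{2}$; the proof then closes by invoking $C_1a\le1$ (from $a\le 1/C_1$), which gives $-\frac{a}{2}\le-\frac{C_1}{2}a^2$. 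For $x$ outside $[x_0-h,x_0+h]$ one has $d=h$ and $a\le 1/C_1$, so $t^{\ast}\le h$ and only the first regime occurs; the endpoint regime is genuinely needed only for $x$ near the minimum, where $d$ may be smaller than $h$. Assembling the three location cases, which are symmetric under $x\mapsto -x$ and all reduce to the same scalar inequality, yields the claim.
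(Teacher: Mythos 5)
Your proof is correct and follows essentially the same route as the paper's: in both arguments the competitor $y$ is taken on the segment from $x$ to the reference point defining $(Df)(x)$, the convexity chord bound is combined with the quadratic branch of $\vp_1$, and one finishes by minimizing a one-variable quadratic. The only difference is bookkeeping: the paper parametrizes by the interpolation parameter $\theta$, bounds $\vp_1(x-y)\le \theta^2/(2C_1)$ via $|x-y|\le h\theta$, and simply plugs in $\theta=C_1(Df)(x)\in[0,1]$ (feasible by the Lipschitz hypothesis), which avoids your interior-versus-endpoint case analysis.
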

\begin{proof}
Let us consider the case when $x \geq x_0+h$. We take $\theta \in [0,1]$ and write $y=\theta (x-h) + (1-\theta)x$. Note that $x-y=h\theta$. By the convexity of $f$ we have
\begin{align*}
	(f \Box \vp_1)(x) & \leq f(y)+\vp_1(x-y) \leq \theta f(x-h)+(1-\theta)f(x) + \vp_1(h\theta) \\
	& = \theta f(x-h)+(1-\theta)f(x) + \frac{1}{2C_1}\theta^2.
\end{align*}
Let us now take $\theta = C_1(f(x)-f(x-h))$. Note that $0 \leq f' \leq 1/C_1h$ yields $\theta \in [0,1]$. We get
\[
	(f \Box \vp_1)(x) \leq f(x) - \theta(f(x)-f(x-h)) +  \frac{1}{2C_1}\theta^2 = f(x)- \frac{C_1}{2} (f(x)-f(x-h))^2.
\] 

The case $x \leq x_0-h$ follows by similar computation (one has to take $y=\theta (x+h) + (1-\theta)x$). Also, in the case $x \in [x_0-h,x_0+h]$ it is enough to take $y=\theta x_0 + (1-\theta)x$ and use the fact that $|x-y|=|\theta(x-x_0)|\leq h\theta$.

\end{proof}

\begin{proof}[Proof of Theorem \ref{convtau}]
We begin by showing that (a) implies (c). We do this in three steps.

\textit{Step 1.} We first show that it is enough to consider only the case when $f$ satisfies $|f'| \leq 1/C_\tau$. Since  $\inf f > -\infty$, then either there exists a point $y_0$ such that $|f'(y_0)|\leq 1/C_\tau$, or $f$ attains its minimum at some point $y_0$ (for example if $f(x)=2|x|/C_\tau$, only the second case is valid). Let us fix this point $y_0$. Moreover, let us define $y_-=\sup\{y \leq y_0: f'(y) \leq -1/C_\tau\}$ and  $y_+=\inf\{y \geq y_0: f'(y) \geq 1/C_\tau\}$ (if $f'<1/C_\tau$ then we adopt the notation $y_+=\infty$ and, similarly, $y_-=-\infty$ if $f'>-1/C_\tau$). Let us consider a function $g$ with $g=f$ on $[y_-,y_+]$  and with the derivative satisfying $g'=\textrm{sgn}(f')\min\{|f'|,1/C_\tau \}$ when $f' \geq 0$ and $g'=\max\{f',-1/C_\tau \}$ when $f'<0$ . Clearly $g \leq f$ and $g(x)=f(x)$ for $y_- \leq x \leq y_+$.

We now observe that $f \Box \vp = g \Box \vp$. To see this recall that $(f \Box \vp)(x)=\inf_{y}(f(y)+\vp(x-y))$. The function $\psi_x(y)=f(y)+\vp(x-y)$ satisfies $\psi_x'(y)=f'(y)-\vp'(x-y) \geq f'(y)-1/C_\tau$. Thus, for $y \geq y_+$ we have $\psi_x'(y) \geq 0$ and therefore $\psi_x$ is non-decreasing for $y \geq y_+$. Similarly, $\psi_x$ is non-increasing for $y \leq y_-$.  This means that  $\inf_{y}(f(y)+\vp(x-y))$ is attained at some point $y=y(x)\in [y_-,y_+]$. Thus,
\[
	(f \Box \vp)(x)=f(y(x))+\vp(x-y(x)) =g(y(x))+\vp(x-y(x)) = (g \Box \vp)(x). 
\] 
The last equality follows from the fact that the minimum of $y \mapsto g(y)+\vp(x-y)$ is achieved at the same point $y=y(x)$. Therefore, since $f \Box \vp =g \Box \vp$ and $g \leq f$, we have   
\[
	\left( \call{}{}{e^{f \Box \vp}}{\mu} \right) \left( \call{}{}{e^{-f}}{\mu} \right) \leq \left( \call{}{}{e^{g \Box \vp}}{\mu} \right) \left( \call{}{}{e^{-g}}{\mu} \right) 
\]
and $|g'| \leq 1/C_\tau$.

\textit{Step 2.} The inequality \eqref{deftau} stays invariant when we add a constant to the function $f$. Thus, we may and will assume that $f(0)=0$. Note that from the elementary inequality $4ab \leq (a+b)^2$ we have
\[
	4\left( \call{}{}{e^{f \Box \vp}}{\mu} \right) \left( \call{}{}{e^{-f}}{\mu} \right) \leq \left( \call{}{}{\left(e^{f \Box \vp}+e^{-f}\right)}{\mu}  \right)^2.
\]
Thus, it is enough to show that 
\[
	 \call{}{}{\left(e^{f \Box \vp}+e^{-f}\right)}{\mu} \leq 2.
\]

\textit{Step 3.}
Assume $|f'| \leq 1/C_\tau$. Take $C_1=17/(1-\lambda)^2$, $C_\tau=C_1h$ and $\vp(x) =\vp_0(x/C_\tau)$. By the convexity of $\vp_0$ we get $\vp(x)  \leq \frac{1}{C_1}\vp_0(x/h)$, since $C_1>1$. Thus, by Lemma \ref{bound} we get $f \Box \vp \leq f(x)- \frac12 C_1 |(Df)(x)|^2$. By the mean value theorem $|(Df)(x)|/h \leq 1/C_\tau$. Therefore, $\frac12 C_1 |(Df)(x)|^2 \leq \frac12 C_1 (\frac{h}{C_\tau})^2=1/2C_1$. Let $\alpha(C_1)=2C_1(1-\exp({-\frac{1}{2C_1}}))$. The convexity of the exponential function yields $e^{-s} \leq 1-\alpha(C_1)s$, $s \in [0,1/2C_1]$. Therefore, 
\begin{align*}
	  \call{}{}{\left(e^{f \Box \vp}+e^{-f}\right)}{\mu} & \leq \call{}{}{\left(e^{f(x)- \frac12 C_1 |(Df)(x)|^2}+e^{-f(x)}\right)}{\mu(x)} \\
	  & \leq \call{}{}{\left(e^{f(x)}\left(1- \frac12 C_1 \alpha(C_1) |(Df)(x)|^2 \right)+e^{-f(x)}\right)}{\mu(x)}.
\end{align*}
Therefore, since $e^f+e^{-f}-2=(e^{f/2}-e^{-f/2})^2$, we are to prove that
\[
	 \call{}{}{\left(e^{f/2} - e^{-f/2}  \right)^2}{\mu} \leq  \frac{C_1}{2}  \alpha(C_1) \call{}{}{ e^{f(x)} |(Df)(x)|^2 }{\mu(x)}.
\]
From Lemma \ref{bobex} this inequality is true whenever $\frac12 C_1 \alpha(C_1) \geq \frac{8}{(1-\lambda)^2}$. It suffices to observe that
\[
	\frac12 C_1 \alpha(C_1) = C_1^2 \left( 1- e^{-\frac{1}{2C_1}} \right) \geq C_1^2 \left( 1- \frac{1}{1+\frac{1}{2C_1}} \right)  = \frac{C_1}{2+\frac{1}{C_1}} \geq \frac{C_1}{2+\frac18} = \frac{8}{(1-\lambda)^2}. 
\]

We now sketch the proof of the fact that (c) implies (b). Due to the standard approximation argument one can assume that $f$ is a convex $C^2$ smooth function with bounded first and second derivative (note that in the definition of the convex Poincar\'e inequality we assumed that $f'$ is bounded). Consider the function $f_\e=\e f$. The infimum of $\psi_x(y)=\vp(y)+\e f(x-y)$ is attained at the point $y$ satisfying the equation $\psi_x'(y)=\vp'(y)-\e f'(x-y)=0$. Note that $\psi_x'$ is strictly increasing on the interval $[-C_\tau,C_\tau]$. If $\e$ is sufficiently small, it follows that the above equation has a unique solution $y_x$ and that $y_x \in [-C_\tau,C_\tau]$. Thus, $y_x=C_\tau^2 \e f'(x-y_x)$. This implies $y_x=\e C_\tau^2 f'(x)+o(\e)$. We get 
\begin{align*}
	f \Box \vp(x)& =\vp(y_x)+\e f(x-y_x)=\frac{1}{2C_\tau^2}y_x^2 + \e f(x-\e C_\tau^2 f'(x)) +o(\e^2) \\
	& = \frac12 \e^2  C_\tau^2 f'(x)^2 + \e f(x) - \e^2 C_\tau^2 f'(x)^2 + o(\e^2) 
\\ &	 = \e f(x) - \frac12  \e^2 C_\tau^2 f'(x)^2 +o(\e^2).
\end{align*}
Therefore, from the infimum convolution inequality we get
\[
	\left( \call{}{}{e^{\e f(x) - \frac12  \e^2 C_\tau^2 f'(x)^2+o(\e^2)}}{\mu(x)} \right) \left( \call{}{}{e^{-\e f}}{\mu} \right) \leq 1.
\]
Testing \eqref{deftau} with $f(x)=|x|/C_\tau$ one gets that $\call{}{}{e^\vp}{\mu}<\infty$ and therefore $\call{}{}{e^{|x|/C_\tau}}{\mu}<\infty$. Also, there exists a constant $c>0$ such that $|f(x)|\leq c(1+|x|)$, $x \in \mb{R}$. As a consequence, after some additional technical steps, one can consider the Taylor expansion of the above quantities in $\e=0$. This gives
\[
	\left( \call{}{}{\left(1+\e f -\frac12 \e^2 C_\tau^2 f'^2 +\frac12 \e^2 f^2 + o(\e^2)\right)}{\mu} \right) \left( \call{}{}{\left( 1- \e f + \frac12 \e^2 f^2 + o(\e^2) \right)}{\mu} \right) \leq 1.
\] 
Comparing the terms in front of $\e^2$ leads to 
\[
	\call{}{}{f^2}{\mu} - \left(\call{}{}{f}{\mu}\right)^2 \leq \frac12 C_\tau^2 \call{}{}{f'^2}{\mu}.
\] 
This is exactly the Poincar\'e inequality with constant $\frac12 C_\tau^2$.

We show that (b) implies (a). Suppose that a symmetric Borel probability measure $\mu$ on $\R$ satisfies the convex Poincar\'e inequality with a constant $C_p$. Consider the function $f_u(x) = \max\{x-u,0\}$, $u \geq 0$. We have
\[
\call{\R}{}{|f_u'(x)|^2}{\mu (x)} = \mu([u,\infty)).
\]
Since $f_u(y)=0$ for $y \leq 0$ and $\mu((-\infty,0]) \geq 1/2$, one gets
\begin{align*}
\textrm{Var}_\mu f_u & = \frac{1}{2}\int_\R\int_\R \left( f_u(x) - f_u(y) \right)^2 \dd\mu (x) \dd\mu(y) \geq \frac{1}{2}\int_\R\int_{-\infty}^0 \left( f_u(x) - f_u(y) \right)^2 \dd\mu (x) \dd\mu(y)  \\
& \geq    \frac{1}{4} \int_\R \left( f_u(x) \right)^2 \dd \mu (x) \geq  \frac{1}{4} \int_{u+\sqrt{8C_p}}^\infty \left( f_u(x) \right)^2 \dd \mu (x) \geq  2C_p\mu\left(\left[u + \sqrt{8C_p}, \infty\right)\right).
\end{align*}
These two observations, together with Poincar\'e inequality, yield that $\mu \in \mc{M}(\sqrt{8C_p},1/2)$.
\end{proof}

\section{Concentration properties}\label{corro}

We show that the convex property ($\tau$) implies concentration for convex sets.

\begin{prop}\label{gencon}
Suppose that a Borel probability measure $\mu$ on $\mb{R}^n$ satisfies the convex property $(\tau)$ with a non-negative cost function $\vp$, restricted to the family of convex functions. Let $B_\vp(t)=\{x \in \mb{R}^n: \ \vp(x) \leq t\}$. Then for any convex set $A$ we have
\[
	\mu(A+B_\vp(t)) \geq 1- \frac{e^{-t}}{\mu(A)}.
\]
\end{prop}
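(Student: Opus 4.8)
The plan is to deduce the geometric concentration statement directly from the functional inequality \eqref{tau} by testing it against a well-chosen convex function $f$ built from the convex set $A$. The natural candidate is a function that is $0$ on $A$ and large off $A$, so that $e^{-f}$ concentrates on $A$ and $e^{f\Box\vp}$ controls the enlargement $A+B_\vp(t)$. Concretely, I would take $f=f_M$ to be $0$ on $A$ and equal to a large constant $M>0$ off $A$, i.e.\ $f_M = M\cdot\1_{A^c}$; since $A$ is convex, this $f_M$ is a convex function (its sublevel sets $\{f_M\le s\}$ are either $A$ or all of $\R^n$, hence convex), so it is an admissible test function in the convex property $(\tau)$.

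With this choice I would estimate the two integrals in \eqref{tau} separately. For the second factor, $\call{}{}{e^{-f_M}}{\mu}=\mu(A)+e^{-M}\mu(A^c)\geq \mu(A)$. For the first factor I need a lower bound on $f_M\Box\vp$ off the enlarged set. The key geometric observation is that if $x\notin A+B_\vp(t)$, then for every $y\in A$ we have $x-y\notin B_\vp(t)$, i.e.\ $\vp(x-y)>t$; and for $y\notin A$ we have $f_M(y)=M$. Hence the infimum defining $(f_M\Box\vp)(x)=\inf_y\{f_M(y)+\vp(x-y)\}$ is at least $\min\{t,\,M\}$, since splitting into $y\in A$ and $y\notin A$ gives either $0+\vp(x-y)>t$ or $M+\vp(x-y)\geq M$. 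Taking $M\geq t$ this yields $(f_M\Box\vp)(x)\geq t$ for all $x\notin A+B_\vp(t)$, so that
\[
	\call{}{}{e^{f_M\Box\vp}}{\mu}\geq \int_{(A+B_\vp(t))^c} e^{f_M\Box\vp}\,\dd\mu \geq e^{t}\,\mu\big((A+B_\vp(t))^c\big).
\]

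Combining these two bounds with \eqref{tau} gives
\[
	e^{t}\,\mu\big((A+B_\vp(t))^c\big)\cdot \mu(A)\leq \left(\call{}{}{e^{f_M\Box\vp}}{\mu}\right)\left(\call{}{}{e^{-f_M}}{\mu}\right)\leq 1,
\]
and rearranging yields $\mu\big((A+B_\vp(t))^c\big)\leq e^{-t}/\mu(A)$, which is exactly the claimed inequality $\mu(A+B_\vp(t))\geq 1-e^{-t}/\mu(A)$. Note this argument only ever uses $(\tau)$ on the single convex function $f_M$, so the restriction of property $(\tau)$ to convex functions suffices, matching the hypothesis.

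The step I expect to require the most care is the admissibility of the test function $f_M$: the definition \eqref{deftau} requires $f$ convex with $\inf f>-\infty$, and $f_M=M\1_{A^c}$ is only lower semicontinuous and takes the value $0$ on $A$, so I should check that it is genuinely convex (true because $A$ is convex, though one must be slightly careful when $A$ is not closed, where taking the closure of $A$ or a limiting argument in $M\to\infty$ cleans things up) and confirm that the infimum-convolution lower bound $(f_M\Box\vp)(x)\ge\min\{t,M\}$ holds with the stated splitting. A minor secondary point is to justify letting $M\to\infty$ if one prefers a genuinely $\{0,\infty\}$-valued indicator; but since taking any fixed $M\ge t$ already closes the argument, no limit is actually needed, and the bound on $\call{}{}{e^{-f_M}}{\mu}$ only improves as $M$ grows.
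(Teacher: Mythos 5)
Your overall strategy is the same as the paper's --- test the inequality with a convex function that vanishes on $A$ and is large on $A^c$, then bound the two factors exactly as you do --- but your concrete choice of test function breaks the argument. The step function $f_M = M\1_{A^c}$ is \emph{not} convex for any finite $M>0$: convexity of the sublevel sets $\{f_M\le s\}$ only gives quasi-convexity, which is strictly weaker. For instance, with $A=[-1,1]\subset\R$, convexity would force $f_M(2)\le\tfrac12\bigl(f_M(0)+f_M(4)\bigr)=M/2$, whereas $f_M(2)=M$. More structurally, a convex function on $\R^n$ that is bounded above is constant, so \emph{no} bounded non-constant function can ever serve as a test function here; in particular your closing remark that ``any fixed $M\ge t$ already closes the argument, no limit is actually needed'' cannot be rescued. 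This matters because the hypothesis supplies property $(\tau)$ only for \emph{convex} $f$ (the measures of interest, e.g.\ those in $\mc{M}(h,\lambda)$, may be purely atomic and fail the unrestricted property \eqref{tau}), so the inequality may not be applied to $f_M$, and your key display $e^{t}\,\mu\bigl((A+B_\vp(t))^c\bigr)\,\mu(A)\le 1$ is unjustified as written.

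The repair is exactly the paper's proof. Take $f$ to be the extended-value convex indicator of $A$ ($f=0$ on $A$, $f=+\infty$ outside), which is convex in the extended sense precisely because $A$ is convex; your splitting of the infimum then works verbatim, since for $x\notin A+B_\vp(t)$ the branch $y\notin A$ contributes $+\infty$ and hence $(f\Box\vp)(x)=\inf_{y\in A}\vp(x-y)\ge t$. To stay inside the class of real-valued convex test functions, approximate $f$ by $f_n(x)=n\dist(x,A)$ --- genuinely convex, as the distance to a convex set is convex --- and let $n\to\infty$; this limiting family is what replaces your finite plateau at height $M$, and it is unavoidable. So the gap is localized entirely in the admissibility point you yourself flagged as the delicate step: you resolved it with the wrong criterion (convexity of sublevel sets instead of convexity of the function).
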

The proof of this proposition is similar to the proof of Proposition 2.4 in \cite{LW}. We recall the argument.

\begin{proof}
Let $f=0$ on $A$ and $f=\infty$ outside of $A$. Note that $f$ is convex (to avoid working with functions having values $+\infty$ one can consider a family of convex functions $f_n= n \dist(A,x)$ and take $n \to \infty$). Suppose that $(f \Box \vp)(x) \leq t$. Then there exists $y \in \mb{R}^n$ such that $f(y)+\vp(x-y) \leq t$. Thus, $y \in A$ and $x-y \in B_\vp(t)$. Therefore $x \in A+B_\vp(t)$. It follows that $x \notin A+B_\vp(t)$ implies $(f \Box \vp)(x) > t$. Applying the infimum convolution inequality we get 
\[
	e^t (1-\mu(A+B_\vp(t))\cdot \mu(A) \leq \left( \call{}{}{e^{f \Box \vp}}{\mu}\right) \left( \call{}{}{e^{-f}}{\mu} \right) \leq 1.
\]    
Our assertion follows.
\end{proof}

We are ready to derive the two-level concentration for convex sets.

\begin{proof}[Proof of Corollary \ref{corr1}]
The argument is similar to \cite[Corollary 1]{M}. Due to Corollary \ref{tensor}, $\mu=\mu_1 \otimes \ldots \otimes \mu_n$ satisfies property ($\tau$) with the cost function $\vp(x)=\sum_{i=1}^n \vp_0(x_i/C_\tau)$. Suppose that $\vp(x)\leq t$. Define $y,z \in \mb{R}^n$ in the following way. Take $y_i=x_i$ if $|x_i|\leq C_\tau$ and $y_i=0$ otherwise. Take $z_i=x_i$ if $|x_i|>C_\tau$ and $z_i=0$ otherwise. Then $x=y+z$. Moreover, 
\[
	 \sum_{i=1}^n \vp(y_i/C_\tau) +  \sum_{i=1}^n \vp(z_i/C_\tau) =   \sum_{i=1}^n \vp(x_i /C_\tau) \leq t.
\]
In particular $|y|_2^2 \leq 2C_\tau^2 t$ and  $t \geq \sum_{i=1}^n \vp(z_i/C_\tau) \geq \frac12 |z|_1 /C_\tau$, since $|z_i|-\frac12 \geq \frac12 |z_i|$ for $|z_i| \geq 1$. This gives $x \in \sqrt{2t}C_\tau B_2^n + 2 t C_\tau B_1^n$. Our assertion follows from Proposition \ref{gencon}.  
\end{proof}

Finally, we prove concentration for convex Lipschitz functions.  

\begin{proof}[Proof of Corollary \ref{corr2}]
The proof of \eqref{ineq1} is similar to the proof of Propositon 4.18 in \cite{L}. Let us define a convex set $A=\{f \leq \med_\mu f\}$ and observe that $\mu(A) \geq 1/2$. Moreover,
\[
	A+C_\tau(\sqrt{2t}B_2^n + 2t B_1^n) \subset \{f \leq \med_\mu f + C_\tau(a\sqrt{2t} + 2b t  )  \}.
\]
Applying Corollary \ref{corr1} we get
\[
	\mu\left( \left\{  f > \med_\mu f +  C_\tau(a\sqrt{2t} + 2bt  )  \right\} \right) \leq 2 e^{-t}, \qquad t \geq 0.
\]
Take $s= C_\tau(a\sqrt{2t} + 2b t  )$ and $r=s/C_\tau$. Suppose that $\frac{r}{b} \leq \frac{r^2}{a^2}$. Then 
\[
	a\sqrt{2t} + 2t b = r \geq \frac12 \sqrt{a^2r/b} +  \frac12 r = a\sqrt{2 \frac{r}{8b}} + 2b \frac{r}{4b}\geq a\sqrt{2 \frac{r}{8b}} + 2b \frac{r}{8b}.  	
\]
By the monotonicity of $x \mapsto a\sqrt{2x}+2xb$, $x \geq 0$ it follows that $\frac18 \min\{\frac{r}{b}, \frac{r^2}{a^2}\} = \frac{r}{8b} \leq t$. 
Moreover, if $\frac{r}{b} \geq \frac{r^2}{a^2}$, then 
\[
	a\sqrt{2t} + 2t b = r \geq   \frac12 r +  \frac{br^2}{2 a^2} = a \sqrt{2 \frac{r^2}{8a^2}} +  2b\frac{r^2}{4 a^2} \geq   a \sqrt{2 \frac{r^2}{8a^2}} +  2b \frac{r^2}{8 a^2}  	.
\]
Therefore, $\frac18 \min\{\frac{r}{b}, \frac{r^2}{a^2}\} = \frac{r^2}{8a^2} \leq t$. Thus,
\[
	\mu\left( \left\{  f > \med_\mu f +  r C_\tau  \right\} \right)  \leq 2 e^{-t} \leq 2 \exp\left( -\frac18 \min\left\{\frac{r}{b}, \frac{r^2}{a^2}\right\} \right), \qquad t \geq 0.
\]

For the proof of \eqref{ineq2} we follow \cite{T}. Define a convex set $B=\{f < \med_\mu f -  C_\tau(a\sqrt{2t} + 2bt  ) \}$ with $t\geq 0$. It follows that 
\[
	A+C_\tau(\sqrt{2t}B_2^n + 2t B_1^n) \subset \{f < \med_\mu f \}
\]
and thus Corollary \ref{corr1} yields
\[
	\frac12 \geq \mu\left( A+C_\tau(\sqrt{2t}B_2^n + 2t B_1^n) \right) \geq 1-\frac{e^{-t}}{\mu(B)}.
\]
Therefore $\mu(B)\leq 2e^{-t}$. To finish the proof we proceed as above.
\end{proof}

\section*{Acknowledgements}

We would like to thank Rados\l aw Adamczak for his useful comments regarding concentration estimates for Lipschitz functions. 

This research was supported in part by the Institute for Mathematics and its Applications with funds provided by the National Science Foundation.

\end{document}